\pgfplotsset{compat=1.17}
\renewcommand{\leq}{\leqslant}
\renewcommand{\geq}{\geqslant}
\newcommand{\ind}{\mathbf{1}}
\renewcommand{\P}[1]{\mathbb{P}\left(#1\right)}
\newcommand{\e}{\varepsilon}
\newcommand{\Z}{{\mathbb{Z}}}
\newcommand{\R}{\mathbb{R}}
\newcommand{\Hprod}[1]{\left\langle #1\right\rangle}
\newcommand{\Ex}[1]{\mathbb{E}\left(#1\right)}
\newcommand{\Var}[1]{\mathrm{Var}\left(#1\right)}
\newcommand{\hatpi}{\widehat{\pi}}
\theoremstyle{plain}
\newtheorem{theorem}{Theorem}
\newtheorem{proposition}[theorem]{Proposition}
\newtheorem{lemma}[theorem]{Lemma}
\theoremstyle{definition}
\begin{document}
\title{Hydrodynamic Limit for repeated averages on the complete graph} 
\author{Alberto M. Campos\orcidlink{0000-0002-1823-0838}}
\author{Tertuliano Franco \orcidlink{0000-0002-1549-2875}}
\address{Universidade Federal da Bahia, Instituto de Matemática, Campus de Ondina, Av. Adhemar de Barros, S/N, CEP 40170-110, Salvador/BA,
Brazil}
\email{tertu@ufba.br}
\author{Markus Heydenreich\orcidlink{0000-0002-3749-7431}}
\author{Marcel Schrocke\orcidlink{0009-0004-6528-6035}}
\address{Universität Augsburg, Department of Mathematics, D-86135 Augsburg, Germany}
\email{albertomizrahycampos@gmail.com, markus.heydenreich@uni-a.de, marcel.schrocke@uni-a.de}

\begin{abstract}
We establish a hydrodynamical limit for the averaging process on the complete graph with $N$ vertices, showing that, after a timescale of order $N$, the empirical distribution of opinions converges to a unique measure. Moreover, if the initial distribution is absolutely continuous with respect to the Lebesgue measure, the limiting measure remains absolutely continuous and its density satisfies a non-diffusive differential equation, that resembles the Smoluchowski coagulation equation.
\end{abstract}

\date{\today}

\subjclass{ 60K35, 
82C22,  	
 35Q70,  	
 37L05,  	
 47J35
 }

\maketitle


\section{Introduction} \label{Sec:Intro}

Consider a set of $N$ individuals, each with an initial opinion $\omega_0 = (\omega_0(1), \dots, \omega_0(N)) \in \mathbb{R}^N$. We define the averaging process as a Markov chain $(\omega_k)_{k \in \mathbb{N}}$ where, given a configuration $\omega_k$, the configuration $\omega_{k+1}$ is obtained by selecting two individuals, $x$ and $y$, uniformly at random and updating  their opinions to their mean value, $\omega_{k+1}(x) = \omega_{k+1}(y) = \frac{\omega_k(x) + \omega_k(y)}{2}$, while all other opinions remain unchanged.

First introduced by Deffuant et al.\ \cite{Deffuant2000}, each individual in the averaging process has their opinion converging towards the initial average $\frac{1}{N}\sum_{k=1}^N \omega_0(k)$. Furthermore, Chatterjee et al.\ \cite{Chatterjee2022} proved that the averaging process has mixing time $\frac{N\ln{N}}{2\ln{2}}$  with a cut-off window of order $N\sqrt{\ln{N}}$. 
In this article, we contribute to this picture by showing a hydrodynamic limit on the density of opinions that occurs on a time scale of order $N$, and where the density of opinions converges to the solution of a differential equation that resembles the Smoluchowski coagulation equation. 

\subsection*{Main result} To state the result formally, fix an integer $N \geq 1$ and consider the complete graph with vertex set $[N] = \{1, 2, \ldots, N\}$. Consider the state space $\Omega_N=\R^N$, with configuration written as $\omega=(\omega(x))_{x\in [N]} \in \R^N$,
and set the averaging process as a continuous time Markov process with a generator given by
\begin{align}\label{eq:GeneratorClick}
    \mathcal{L}_{N}f(\omega)= \frac{1}{N} \sum_{x,y\in [N]} \left[f(A_{x}^y\omega)-f(\omega)\right],\quad \omega\in \Omega_N,
\end{align} where $f$ is any test function. For $x,y,z \in [N] $, the operator $ A_x^y $ acts on the configuration as  
\begin{align}\label{eq:operatorAxy}
A_x^y \omega(z) = 
\begin{cases} 
\omega(z), &\text{if } z\notin\{x,y\};\vspace{4pt}\\
\dfrac{\omega(x)+\omega(y)}{2},  &\text{if } z\in\{x,y\}.
\end{cases}
\end{align}
We further define the empirical measure associated with the averaging process $(\omega_t)_{t\geq 0}$.
\begin{align}\label{eq:EmpiralMeasureintheclick}
    \pi^N_t(du) =\pi^N(\omega_t,du)=\frac{1}{N}\sum_{x\in [N]} \delta_{\omega_t(x)} (du).
\end{align} Our main result is the following. 
\begin{theorem}\label{thm:HydrodinamiclimitClick}
 Assuming that $\omega_0 = (\nu_i)_{i \in [N]} \in \Omega_N$, where $\nu_i$ are i.i.d. random variables with distribution $\nu$ with compact support in the interval $[-M,M]$. For any time $T>0$, the sequence of trajectories of random measures $\{\pi_t^N : 0 \leq t \leq T\}$ converges weakly to a measure trajectory $\{\pi_t : 0 \leq t \leq T\}$, where $\pi_t$ is the unique measure that satisfy a differential measure equation (see \eqref{eq:measureequation}). Additionally, if $\nu(dx)=\rho_0(x)\,dx$ is absolutely continuous (w.r.t.\ Lebesgue measure), then $\pi_t$ is also absolutely continuous with density $\rho(x,t)$ that is the almost everywhere unique solution of the differential equation 
\begin{align}\label{eq:convolution-differential-equation}
  \begin{cases}
    \partial_t \rho(t, u) = 4\left[\rho(t, \cdot) \ast \rho(t, \cdot) \right](2u) - 2\rho(t, u), &(t,u)\in [0,\infty)\times \R; \\
    \rho(0, \cdot) = \rho_0(\cdot).
  \end{cases}
\end{align}
\end{theorem}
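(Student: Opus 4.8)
The plan is to pass from the microscopic dynamics to the weak measure equation \eqref{eq:measureequation} through a martingale characterization, and then to extract the density equation \eqref{eq:convolution-differential-equation} by observing that the whole evolution closes at the level of characteristic functions. First I would apply Dynkin's formula to the linear functional $F(\omega)=\Hprod{\pi^N(\omega),\phi}=\frac1N\sum_{x}\phi(\omega(x))$. The decisive point, special to the complete graph, is that $\mathcal{L}_N F$ closes in the empirical measure: since $A_x^y$ alters only the coordinates $x,y$ and contributes $0$ when $x=y$, a direct computation gives
\[
\mathcal{L}_N F(\omega)=2\iint \phi\Big(\tfrac{u+v}{2}\Big)\,\pi^N(du)\,\pi^N(dv)-2\Hprod{\pi^N,\phi}.
\]
Hence $M^N_t(\phi)=\Hprod{\pi^N_t,\phi}-\Hprod{\pi^N_0,\phi}-\int_0^t\mathcal{L}_N F(\omega_s)\,ds$ is a martingale, whose predictable quadratic variation equals $\int_0^t\frac1N\sum_{x,y}[F(A_x^y\omega_s)-F(\omega_s)]^2\,ds$. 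Each increment is $O(1/N)$, so summing $N^2$ terms against the prefactor $1/N$ yields $\Ex{M^N_t(\phi)^2}=O(t/N)$ for bounded $\phi$, and the martingale vanishes in $L^2$.

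Next I would establish tightness. Because averaging preserves $[-M,M]$, every $\pi^N_t$ is supported in this fixed compact set, so compact containment is automatic and the state space of probability measures is compact. Tightness of the real processes $\{\Hprod{\pi^N_\cdot,\phi}\}$ in Skorokhod space then follows from the decomposition above (uniformly bounded drift plus a vanishing martingale) via the Aldous criterion, for $\phi$ in a countable dense family. Any limit point $\pi_\cdot$ consequently satisfies
\[
\Hprod{\pi_t,\phi}=\Hprod{\pi_0,\phi}+\int_0^t\Big[\,2\iint\phi\Big(\tfrac{u+v}{2}\Big)\pi_s(du)\,\pi_s(dv)-2\Hprod{\pi_s,\phi}\,\Big]\,ds,
\]
which is \eqref{eq:measureequation}, with $\pi_0=\nu$ by the law of large numbers for the i.i.d.\ initial data.

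Uniqueness is where the structure pays off. Taking $\phi(u)=e^{i\xi u}$ and using $e^{i\xi(u+v)/2}=e^{i(\xi/2)u}e^{i(\xi/2)v}$, the equation closes for the characteristic function $\hatpi_t(\xi)=\int e^{i\xi u}\pi_t(du)$, giving
\[
\partial_t\hatpi_t(\xi)=2\,\hatpi_t(\xi/2)^2-2\,\hatpi_t(\xi).
\]
If $\hatpi,\widehat{\tilde\pi}$ are characteristic functions of two solutions with the same initial data, their difference $D_t=\hatpi_t-\widehat{\tilde\pi}_t$ obeys
\[
\partial_t D_t(\xi)=2\big[\hatpi_t(\xi/2)+\widehat{\tilde\pi}_t(\xi/2)\big]\,D_t(\xi/2)-2\,D_t(\xi).
\]
Since characteristic functions are bounded by $1$, the Duhamel form with $D_0\equiv0$ gives $|D_t(\xi)|\leq 4\int_0^t|D_s(\xi/2)|\,ds$; as $\xi\mapsto\xi/2$ is a bijection of $\R$ and the $D_s$ are bounded by $2$, setting $g(t)=\sup_\xi|D_t(\xi)|$ yields $g(t)\leq4\int_0^t g(s)\,ds$, and Gronwall forces $g\equiv0$. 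This proves uniqueness of \eqref{eq:measureequation} and completes the first assertion.

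For the density statement, suppose $\nu(dx)=\rho_0(x)\,dx$. Writing $S(\mu,\mu)$ for the law of $\tfrac{U+V}{2}$ with $U,V$ independent of law $\mu$, a change of variables shows that $\int\phi(u)(\rho\ast\rho)(2u)\,du=\tfrac12\iint\phi(\tfrac{a+b}{2})\rho(a)\rho(b)\,da\,db$; thus if $\mu=\rho\,du$ then $S(\mu,\mu)$ is absolutely continuous with density $2(\rho\ast\rho)(2\,\cdot)$, and \eqref{eq:measureequation} takes the Duhamel form $\pi_t=e^{-2t}\pi_0+2\int_0^t e^{-2(t-s)}S(\pi_s,\pi_s)\,ds$. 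I would then solve the candidate density equation directly in mild form,
\[
\rho(t,\cdot)=e^{-2t}\rho_0+4\int_0^t e^{-2(t-s)}\,\big(\rho(s,\cdot)\ast\rho(s,\cdot)\big)(2\,\cdot)\,ds,
\]
which is a contraction on $C([0,T],L^1(\R))$ because $\norm{(\rho_1\ast\rho_1)(2\cdot)-(\rho_2\ast\rho_2)(2\cdot)}_1\leq\tfrac12(\norm{\rho_1}_1+\norm{\rho_2}_1)\norm{\rho_1-\rho_2}_1$; nonnegativity and unit mass propagate along the Picard iterates (the mass solves $\partial_t\norm{\rho}_1=2\norm{\rho}_1^2-2\norm{\rho}_1$, fixed at $1$) and the support stays in $[-M,M]$, yielding a global nonnegative $L^1$-solution. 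The convolution identity above shows that $\rho(t,\cdot)\,du$ solves \eqref{eq:measureequation}, so by the uniqueness just proved it equals $\pi_t$, and passing to Fourier variables turns the mild equation into the characteristic-function ODE, which is precisely \eqref{eq:convolution-differential-equation}, with almost-everywhere uniqueness inherited from the $L^1$ contraction. I expect the main obstacle to lie in these closure/uniqueness steps—running the Gronwall argument through the dyadic rescaling $\xi\mapsto\xi/2$ and, above all, genuinely propagating (not merely instantaneously preserving) absolute continuity under the quadratic operator $S$—whereas the tightness and martingale estimates are routine given the exact closed form of $\mathcal{L}_N F$.
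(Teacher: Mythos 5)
Your proposal is correct, and while the first step coincides with the paper's (the same Dynkin martingale computation, the same $O(t/N)$ bound on the quadratic variation, and tightness from compact support plus bounded drift), the remaining steps take a genuinely different route. For uniqueness, the paper introduces a variance-based norm on $\mathcal{M}_+^1(\mathcal{M}_+^1([-M,M]))$ and runs Gr\"onwall on $\|\pi_t^1-\pi_t^2\|$, which simultaneously forces the limit to be a deterministic trajectory; your characteristic-function closure $\partial_t\hatpi_t(\xi)=2\hatpi_t(\xi/2)^2-2\hatpi_t(\xi)$ with Gr\"onwall on $\sup_\xi|D_t(\xi)|$ is cleaner and exploits the multiplicative structure of the convolution, but you should make explicit that the limit points are a priori \emph{random} measure trajectories, so you need the (easy) extra remark that pathwise uniqueness of the deterministic equation with $\pi_0=\nu$ forces the limiting law $Q^*$ to be a Dirac mass --- the paper's variance norm does this in one stroke. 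The largest divergence is in the absolute continuity step: the paper proves it \emph{probabilistically} at the particle level (Lemma \ref{Lem:X_jconcentration} counts, via Poisson clocks, the vertices whose opinion updates more than $j$ times, and Proposition \ref{Prop:Abscontinuous} uses a perturbation argument mapping a target interval $\mathcal{I}$ at time $t$ back to a set $\widehat{\mathcal{I}}$ of initial opinions with $\lambda(\widehat{\mathcal{I}})\leq 2^j\lambda(\mathcal{I})$), whereas you construct an absolutely continuous solution directly by Picard iteration on the mild form in $C([0,T],L^1)$ and then invoke uniqueness of the measure equation to identify it with $\pi_t$. Your route entirely bypasses the paper's two hardest probabilistic estimates and is arguably more robust (the Lipschitz bound $\tfrac12(\|\rho_1\|_1+\|\rho_2\|_1)\|\rho_1-\rho_2\|_1$ and the propagation of positivity, unit mass and support along the iterates are all correct); what the paper's argument buys in exchange is structural insight into the particle system --- in particular why a Bernoulli initial law produces atoms at the dyadic rationals --- which your analytic construction does not see. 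Both approaches are valid proofs of the theorem as stated.
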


As an application of the theorem, Figures \ref{fig:SimulationBer} and \ref{fig:SimulationCont} illustrate two different simulations of the density $\pi_t^N$ in the averaging process.  In Figure \ref{fig:SimulationBer}, the initial distribution of opinions follows $\mathrm{Ber}(1/2)$ and in each time $t$ the density converges to a discrete measure that assigns positive probability to the dyadic numbers $\left\{0,1,\frac{1}{2}, \frac{1}{4}, \frac{3}{4}, \dots \right\}$, which correspond to the possible trajectories of opinions that change a finite number of times. In contrast, Figure \ref{fig:SimulationCont} starts with an initial density given by $\nu(dx) = \ind\{x \in [0,1]\} 2x \,dx$, and its evolution follows the solution of the differential equation~\eqref{eq:convolution-differential-equation}.

\begin{figure}[h!]
    \centering
    \includegraphics[width=1\linewidth]{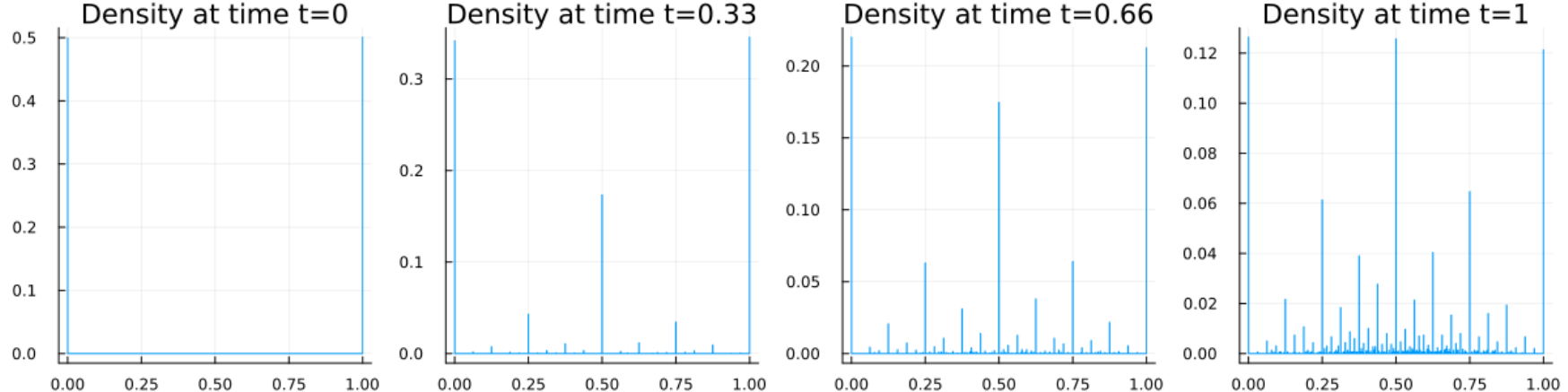}
    \caption{Simulation of the evolution of the opinion density, starting with an initial distribution $\nu \sim \mathrm{Ber}(1/2)$ and considering $N = 3000$ opinions.}
    \label{fig:SimulationBer}
\end{figure}

\begin{figure}[h!]
    \centering
    \includegraphics[width=0.5\linewidth]{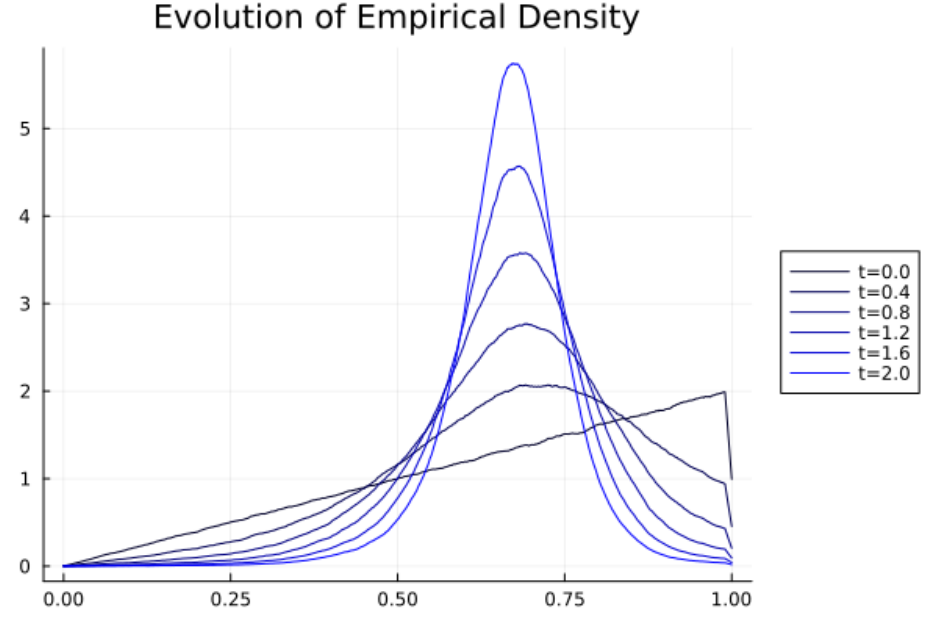}
    \caption{Simulation of the evolution of the opinion density, starting with an initial absolutely continuous distribution $\nu$ such that $\P{\nu<t}=\int_0^t 2x dx$, for $t\in [0,1]$, and considering $N=10^6$ opinions.}
    \label{fig:SimulationCont}
\end{figure}

\subsection*{Background and remarks}  


The differential limit equation \eqref{eq:convolution-differential-equation}, as far as we know, is new in the literature. However, it bears a clear resemblance to the Smoluchowski coagulation equation, which describes the evolution of the density of particles that undergo a coagulation process, see  \cite{Melzak1957,Smoluchowski1927}. In the general Smoluchowski equation, the convolution term is typically weighted by a kernel. In contrast, equation \eqref{eq:convolution-differential-equation} introduces a significant analytical distinction: the convolution term is accelerated by the value $2u$. 

A different connection arises when we view trajectories of opinions as random walks that depend on their density, so that  \eqref{eq:convolution-differential-equation} resembles the nonlinear Fokker-Planck equation for the McKean-Vlasov process (see Chapter 1 and 5 of \cite{Risken1996}). The difference between the limit equations can be explained by the importance of the of non-local interactions for the averaging process.

The solution $\rho(t,x)$ of equation \eqref{eq:convolution-differential-equation} exists and is unique in the Sobolev space $W^{1,1}([0,T])\cap L^1([-M,M])$. This uniquely determines the empirical measure $\pi_t(A) = \int_A \rho(t, x) \, dx$. In contrast, the question of existence and uniqueness of the Smoluchowski equation is only solved for certain choices of kernels; see \cite{Norris1999,McLeod1962}.

Continuing the discussion on the differential equation, we found a nontrivial solution that does not fall within the scope of the theorem. Starting from the Cauchy distribution with parameter $a>0$, the function $$f(x, t) = \frac{1}{\pi} \frac{a}{a^2+x^2} \frac{1}{2e^{2t}-1}$$ is indeed a solution of \eqref{eq:convolution-differential-equation}. This non-trivial solution poses a question about gelation properties that the averaging process may present; see \cite{Hendriks1983} for an analytical perspective and \cite{https://doi.org/10.48550/arxiv.2401.06668} for a probabilistic approach.  In this case, the distribution lacks a first moment and does not satisfy the theorem hypothesis, but the solution presents an exponential loss of mass, that might correspond to the solution of the averaging process in this case.

Regarding the hydrodynamic limit, there are several results in the literature that associate the dynamics of opinions to the solution of differential equations. For instance Cox and Griffeath \cite{Cox1986} demonstrate that the density of opinions in the two-dimensional voter model satisfies the Fisher-Wright diffusion. Sau \cite[Corollary 2.6]{Sau2024}  shows that the opinion model on the $d-$dimensional torus at a fixed time $t$ closely approximates the solution of the heat equation. Similarly, Tomé et al.\  \cite{Tom2023} study variations of the voter model and demonstrate that the evolution of opinions follows a thermodynamic equation. Aldous and Lanoue \cite{Aldous2012} proved that the mean of the process evolves following a differential equation.   It is also worth mentioning that in the work of Deffuant et al.~\cite{Deffuant2000}, a differential equation is used to describe the behavior of opinions on the complete graph with range dependencies.

The averaging process can be defined on any locally-finite (finite or infinite) graph. Recently, Gantert and Vilkas \cite{Nina2024} proved that the averaging process on bounded degree graphs converges for i.i.d.\ initial distribution with suitable moment conditions. 
 
For sake of completeness and to highlight the differences of our main theorem with the current literature, consider the averaging process on the $d$-dimensional hypercubic lattices. That is, fix integers $N\geq 1$ and $d \geq 1$. Define the $d$-dimensional discrete torus of size $N$ as $\mathbb{T}^d_N=(\mathbb{V}_N^d, \mathbb{E}_N^d)$, where the vertex set is $\mathbb{V}_N^d = \left(\mathbb{Z}/N\mathbb{Z}\right)^d$, and the set of unoriented edges is $\mathbb{E}_N^d = \{ \{x, y\} \subseteq \mathbb{V}_N^d \times  \mathbb{V}_N^d : |x - y| = 1 \}$. 
In the $d-$dimensional torus $\mathbb{T}^d_N$, consider $\Omega_N^d=\{(\omega(x))_{x\in \mathbb{V}^d_N}: \omega(x)\in \R\}$, and write the generator of the averaging process, as
\begin{align*}
    \mathcal{L}_{\mathbb{T}^d_N} 
    f(\omega)= N^{2} \sum_{e=\{x, y\}\in \mathbb{E}_N^d} \left[f(A_x^y\omega)-f(\omega)\right],\quad  \omega\in \Omega_N^d, 
\end{align*}  
where $f$ is any test function, and $A_{x}^y$ defined in \eqref{eq:operatorAxy}. The weighted empirical measure
\begin{align}\label{eq:EmpiricalmeasureinTd}
\hatpi_t^N(du)=\hatpi^N(\omega_t,du)=\frac{1}{N^d} \sum_{x\in \mathbb{T}_N^d} \omega_t(x) \delta_{x/N}(du).
\end{align} Then, the averaging process in the $d-$dimensional lattice satisfies:
\begin{theorem}\label{Thm:Hydrodinamics-Z^d}
 Let $\rho_0 \colon \mathbb{T}^d = (\R/\Z)^d \to [-M, M] \subset \R$ be an initial density profile. For every $x \in \mathbb{V}_N^d$, define the initial opinion as $\omega_0^N(x) = \rho_0\left(\frac{x}{N}\right)$. Then, for any time $T > 0$, the sequence of random measures $\{\hatpi_t^N \colon 0 \leq t \leq T\}$ converges weakly as trajectories to the absolutely continuous trajectory $\{\hatpi_t(du) = \rho(t, u) \, du : 0 \leq t \leq T\}$, where the density $\rho(t, u)$ is the unique solution of the heat equation
\begin{align*}
  \begin{cases}
    \partial_t \rho(t,u) = \frac{1}{2}\Delta \rho(t,u), \quad (t,u)\in [0,\infty)\times \mathbb{T}^d;\\
    \rho(0, \cdot) = \rho_0(\cdot).
  \end{cases}
\end{align*}
\end{theorem}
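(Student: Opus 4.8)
The plan is to run the classical Dynkin-martingale scheme for hydrodynamic limits, which simplifies considerably here because the microscopic dynamics is \emph{linear} and the weighted empirical measure is linear in $\omega$. First I would test the generator against the linear functional $\Hprod{\hatpi^N,G} := N^{-d}\sum_{x}\omega(x)\,G(x/N)$ for a smooth $G\colon\mathbb{T}^d\to\R$. A direct computation from \eqref{eq:operatorAxy} shows that when the edge $\{x,y\}$ fires only the coordinates $x,y$ change, giving $\mathcal{L}_{\mathbb{T}^d_N}\,\omega(z)=\tfrac{N^2}{2}\sum_{y\sim z}(\omega(y)-\omega(z))$; combining this with discrete summation by parts on the torus (the graph Laplacian is self-adjoint) yields the exact identity
\[
  \mathcal{L}_{\mathbb{T}^d_N}\,\Hprod{\hatpi^N,G}
  = \Big\langle \hatpi^N,\ \tfrac{N^2}{2}\Delta_N G\Big\rangle,
\]
where $\Delta_N$ is the nearest-neighbour discrete Laplacian. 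The decisive feature — and the reason no replacement or Boltzmann--Gibbs step is required, in sharp contrast with Theorem~\ref{thm:HydrodinamiclimitClick} — is that the right-hand side is again a \emph{linear} functional of the configuration, so the evolution equation for $\Hprod{\hatpi^N,G}$ closes exactly. A standard Taylor estimate then gives $\tfrac{N^2}{2}\Delta_N G\to\tfrac12\Delta G$ uniformly in $x$ for smooth $G$.

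Next I would apply Dynkin's formula to obtain the martingale
\[
  M_t^N(G)=\Hprod{\hatpi_t^N,G}-\Hprod{\hatpi_0^N,G}
  -\int_0^t\Big\langle \hatpi_s^N,\ \tfrac{N^2}{2}\Delta_N G\Big\rangle\,ds,
\]
and control its size via the carré-du-champ. When the edge $\{x,y\}$ fires, the functional jumps by $\tfrac{(\omega(y)-\omega(x))(G(x/N)-G(y/N))}{2N^d}$; since the opinions stay in $[-M,M]$ (the averaging operator preserves this interval) and $G(x/N)-G(y/N)=O(N^{-1})$ for neighbours, multiplying the squared jump by the rate $N^2$ and summing over the $O(N^d)$ edges gives $\Ex{\langle M^N(G)\rangle_t}=O(N^{-d})\to0$. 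Hence $M_t^N(G)\to0$ in $L^2$, so the evolution of $\Hprod{\hatpi_t^N,G}$ becomes asymptotically deterministic.

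I would then establish tightness of the trajectories $\{\hatpi^N\}_N$ in the Skorokhod space of signed-measure-valued paths. Because the opinions are uniformly bounded, the measures have uniformly bounded total variation, and the decomposition above exhibits $t\mapsto\Hprod{\hatpi_t^N,G}$ as a bounded-variation term plus a vanishing martingale; applying Aldous' criterion to a countable dense family of test functions $G$ yields tightness of the measure-valued process. Finally, for identification, passing to the limit along any convergent subsequence and using $\tfrac{N^2}{2}\Delta_N G\to\tfrac12\Delta G$ together with $\hatpi_0^N\to\rho_0(u)\,du$ (a Riemann-sum limit, valid since $\omega_0^N(x)=\rho_0(x/N)$ is deterministic), every limit point $\hatpi$ satisfies the weak heat equation
\[
  \Hprod{\hatpi_t,G}=\Hprod{\hatpi_0,G}+\int_0^t\Big\langle \hatpi_s,\ \tfrac12\Delta G\Big\rangle\,ds
  \qquad\text{for all smooth }G.
\]
Uniqueness of the weak solution of the heat equation forces all limit points to coincide with $\rho(t,u)\,du$, and tightness upgrades subsequential convergence to convergence of the whole sequence. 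The main obstacle is not any isolated hard estimate but rather verifying that the dynamics genuinely closes and that the fluctuations vanish; concretely, the most delicate bookkeeping is the carré-du-champ bound and the uniform finite-difference control of $\Delta_N G$, both of which are routine precisely because of the linearity absent from Theorem~\ref{thm:HydrodinamiclimitClick}.
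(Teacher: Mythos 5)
Your proposal is correct and follows essentially the same route the paper intends: the authors omit the proof precisely because it is the standard Dynkin-martingale/carré-du-champ scheme of Kipnis--Landim Chapter~4, which is what you carry out, and your key observations (the generator acts linearly on $\omega$, summation by parts yields $\tfrac{N^2}{2}\Delta_N G$, the quadratic variation is $O(N^{-d})$, and the equation closes without any replacement lemma) are exactly the points that make the lattice case routine in contrast to Theorem~\ref{thm:HydrodinamiclimitClick}. No gaps.
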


The proof of this theorem follows, with no significant deviations, the steps of the hydrodynamic limit proof for the simple symmetric exclusion process, as presented in Chapter 4 of Kipnis and Landim \cite{Kipnis1999}, and is omitted here. 

We only state it to make evident the differences between the $d-$dimensional lattice and the complete graph. For instance, the definition of the weighted empirical measure \eqref{eq:EmpiricalmeasureinTd} differs from the empirical measure \eqref{eq:EmpiralMeasureintheclick} introduced in Theorem \ref{thm:HydrodinamiclimitClick}. The presence of non-local interactions in the complete graph makes the convergence of the measure $\hatpi^N$ in the Feller-Dynkin martingale complex. The measure $\pi^N$ also poses difficulties, and indeed makes the problem converge to solutions of a measure equation, not a classical differential equation. For further discussion on this issue, see Proposition \ref{Prop:Abscontinuous}.

It is also interesting to point out that by modifying the definition, the qualitative behavior of the limiting equations changes. While in a $d$-dimensional lattice the system follows the heat equation, which is a diffusive equation, we obtain in \eqref{eq:convolution-differential-equation} a non-diffusive equation. In fact, it exhibits the opposite behavior: the mass concentrates at a single point, as can be observed from the definition of the empirical measure in ~\eqref{eq:EmpiralMeasureintheclick}.

For further results on the hydrodynamic limit in $d$-dimensional lattices, we refer to Sau's paper~\cite{Sau2024}. There, using an argument from Caputo, Quattropani, and Sau  \cite{Quattropani2023}, the author relates the spread of the random walk to the spread of information in the averaging process, providing quantitative bounds on the convergence of the averaging process to the heat equation.  Moreover, similar developments are being done for random averaging models, see ~\cite{kipnis1982heat}. 

\subsection*{Definitions} 
On the real line $\R$, consider the Borel $\sigma$-algebra and the usual Lebesgue measure $\lambda$. For two measures $\nu, \mu$, we write $\nu \ll \mu$ whenever $\nu$ is absolutely continuous with respect to $\mu$.  Let $(\mathcal{S},\mathcal{F})$ be a space with its $\sigma$-algebra, denote by $\mathcal{M}_+^1(\mathcal{S})$ the space of probability measures on $\mathcal{S}$ and set $\mathcal{M}(\mathcal{S})$ the space of measures, both spaces equipped with the weak topology. For any point $x \in \mathbb{R}^d$, denote by $\delta_x(\cdot)$ the Dirac measure that assigns total measure to the point $x$.

Regarding function spaces, we equip the product space $[0,T]\times \R$ with the Borel $\sigma$-algebra, and denote by $C^{m,n}( [0,T] \times \R)$ the space of Borel-measureable functions $f \colon [0,T] \times \R \to \R$ with $m$ continuous derivatives in time and $n$ continuous derivatives in space ($m,n\in\mathbb N_0$). Denote by $C(\R)$ the set of continuous functions  $f\colon \R\to \R$, and further by $C_b([0,T]\times\R)$ the set of bounded continuous functions. 

Denote by $L^1( \mathcal{S})$ the space of measurable functions $f$ such that 
\begin{align}
     \|f\|_1 = \int_{ \mathcal{S}} |f(s)| \, ds < \infty.
\end{align} 
Define  $W^{1,k}( \mathcal{S})$ as the space of functions in $L^1(\mathcal{S})$ whose weak derivatives up to order $k$ also belong to $L^1(\mathcal{S})$. For functions $f$, with domain $[0,T]\times \R$, we say that $f\in W^{1,0}([0,T])\cap L^1(\R)$ if $f_x(t)$ belongs to $W^{1,0}([0,T])$ for any $x\in \R$, and $f_t(x)\in L^1(\R)$. Also denote $\|f\|_{\infty}=\sup_{x\in \mathcal{S}} |f(x)|$ to be the supremum norm. 
In this paper, we use two different product notation. The first is between a measure $\mu$ on $\mathcal{S}$ and a function $G \colon \mathcal{S} \to \R$
\begin{align*}
    \langle \mu, G \rangle = \int_{\mathcal{S}} G(x) \, \mu(dx).
\end{align*}
The second product is between two functions $\rho, G \colon \mathcal{S} \to \R$, defined as
\begin{align*}
    \langle \rho, G \rangle = \int_\mathcal{S} G(x) \rho(x) \, dx.
\end{align*}

In order to describe the trajectory of measures on the space $\mathcal{M}_+^1$, we introduce the  the càdlàg space $D([0,T], \mathcal{M}_+^1(\mathcal{S}))$ for any $T > 0$, the space of functions $F : [0,T] \to \mathcal{M}_+^1(\mathcal{S})$ that are right-continuous with left limits. 

Whenever $N$ and $T$ are fixed, we denote by $\mathbb{P}$ and $\mathbb{E}$ the probability measure and expectation induced by the empirical Markov process $\pi_t^N$  on the càdlàg space $D([0,T], \mathcal{M}_+^1(\mathcal{S}))$, where $\mathcal{S}=\mathcal{M}_+^1([-M,M])$. In the càdlàg space, a trajectory $\{\mu_t^N \colon 0\leq t\leq T\}$ converges to the  trajectory $\{\mu_t \colon 0\leq t\leq T\}$ weakly if for every function $G_t\in C_b([0,T]\times\R)$ we have that
\begin{align*}
    \int_0^T \Hprod{\mu_t^N,G_t} dt \to \int_0^T \Hprod{\mu_t,G_t} dt 
\end{align*} in the weak topology of $\mathcal{M}_+^1(\mathcal{S})$.

\section{Proof of Theorem \ref{thm:HydrodinamiclimitClick}} \label{Sec:Proof2} We present the proof of Theorem \ref{thm:HydrodinamiclimitClick} in four steps. 

The first step is to use the entropy method (see  \cite{Kipnis1999}) to find a differential measure equation that the limit of the empirical measure $\pi_t^N$ must satisfy. Since $\pi_t^N$ is a distribution over $\mathcal{M}_+^1([-M,M])$, its limit $\pi_t$ also belongs to $\mathcal{M}_+^1(\mathcal{M}_+^1([-M,M]))$, and the first step only shows that such limit trajectories $\{\pi_t: 0\leq t\leq T\}$ exists. 

The second step is establishing uniqueness of the differential measure equation where, by Gr\"{o}nwall's inequality, it is possible to show that $\pi_t$ is not only an unique element of $\mathcal{M}_+^1(\mathcal{M}_+^1([-M,M]))$ for all $t\in[0,T]$, but consists in a concentrated distribution, i.e., a single element of $\mathcal{M}_+^1([-M,M])$. Therefore, the solution is an element of $D([0,T],\mathcal{M}_+^1([-M,M]))$. 

In order to better understand the nature of these trajectories, we demonstrate in the third step that if the initial distribution is uniform, then the trajectory remains uniform over time. Finally, in the last step, we deduce the differential equation \eqref{eq:convolution-differential-equation} for the density from the differential measure equation.

\subsection*{The measure equation} Following the arguments of Landim and Kipnis \cite{Kipnis1999}), by analyzing the Feller-Dynkin process, one obtains a non-trivial equation for measures that the random measure $\pi_t^N$, which is an element of $\mathcal{M}_+^1(\mathcal{M}_+^1([-M,M]))$, must satisfy in the limit.

For that,  fix $T>0$ and take any function $G_t\in C^{1,0}([0,T],\R)$. Then, we have that  
\begin{align*}
    \mathcal{L}_{N} \Hprod{\pi_t^N,G_t}&= \frac{1}{N} \sum_{x,y\in[N]} \frac{1}{N}  \left(2G_t\left(\frac{\omega_t(x)+\omega_t(y)}{2}\right)-G_t(\omega_t(x))-G_t(\omega_t(y))\right) \\
    &= 2\Hprod{\pi_t^N \ast  \pi_t^N,G_t(\cdot /2)}-2\Hprod{\pi_t^N,G_t}.
\end{align*} 
In this equation, for two finite measures $\mu$ and $\nu$ in the real line, for any measurable set $E\subset \R$, we define the convolution measure by
\begin{align*}
    (\mu*\nu)(E)=\int\int \ind_E(x+y) d\mu(x)d\nu(y),
\end{align*} so that
\begin{align}\label{eq:ConvolutionNformula}
    \pi_t^N \ast  \pi_t^N (du)= \frac{1}{N^2} \sum_{x,y\in [N]} \delta_{\omega_t(x)+\omega_t(y)}(du). 
\end{align}
Using the formula of the Dynkin martingale, see Appendix A1.5 of \cite{Kipnis1999}, we get that the process $M_t^{G,N}$, defined through 
\begin{align}\label{eq:Trajectorymartingalcompletegraph}
    \Hprod{\pi_t^N,G_t}= \Hprod{\pi_0^N,G_0}+ \int_0^t 2\left(\Hprod{\pi_s^N \ast  \pi_s^N,G_s(\cdot/2)}-\Hprod{\pi_s^N,G_s}\right)+\Hprod{\pi_s^N,\partial_s G_s}  ds + M_t^{G,N},
\end{align}
is a martingale. To control the second moment of the martingale, define the process
\begin{align*}
\mathcal{B}_t^{G,N} & =\mathcal{L}_{N,\mu}^d \left(\Hprod{\pi_t^N,G_t}\right)^2 -2\Hprod{\pi_t^N,G_t}\mathcal{L}_{N,\mu}^d \Hprod{\pi_t^N,G_t}\\
&= \frac{1}{N^3} \sum_{(x,y) \in [N]} \left(2G_t\left(\frac{\omega_t(x)+\omega_t(y)}{2}\right)-G_t(\omega_t(x))-G_t(\omega_t(y))\right)^2.
\end{align*} 
By the Carré du Champ formula (see Appendix A1.5 of \cite{Kipnis1999}), we obtain that
\begin{align*}
  \Big(M_t^{G,N}\Big)^2 -  \int_0^t\mathcal{B}_s^{G,N} ds
\end{align*}
is a  (zero mean) martingale. Observe that the function $G_t$ is bounded for every $\omega_t(x) \in [-M, M]$. 
Moreover, since the (double) sum has only $N^2$ summands, we conclude that $\mathcal{B}_t^{N,G}$ converges to zero as $N \to \infty$, uniformly for all $t \in [0,T]$. Thus, by Doob's inequality,  the supremum of  $M_t^{G,N}$ on any time interval $[0,T]$ converges to zero in probability.

To prove that the trajectory $\pi_t^N$ is tight, we apply Prokhorov's Theorem (see, e.g.\ Theorem 1.3 in the Chapter 4 of \cite{Kipnis1999}) to a push forward argument that requires that the opinions are bounded in $[-M,M]$ (see Proposition 1.7 in Chapter 4 of \cite{Kipnis1999}). To this end, we need to show that $\left\{ \Hprod{\pi^N_t , G_t} \colon 0\leq t\leq T \right\}$ is tight in $ D([0,T], \mathcal{M}(\mathbb{R})) $.  

Observe that $\omega_t(x)\in[-M,M]$ for every $x\in [N]$. Therefore, defining the constant $C_0(G)=\max\{G_t(x)\colon \,t\in[0,T],x\in[-M,M]\}$, we get that
\begin{align*}
    \P{\sup_
{0\leq t\leq T}\left|\Hprod{\pi_t^N,G_t}\right|\leq C_0(G)}=1 \ \text{ for every } N>0.
\end{align*}

To control the increment of the process between the times $\tau$ and $\tau + \theta$ (rather $\min\{\tau + \theta, T\}$), note that
\begin{align*}
    \Hprod{\hatpi_{\tau + \theta}^N, G_{\tau+\theta}} - \Hprod{\hatpi_{\tau}^N, G_{\tau}} 
    = & \, M_{\tau + \theta}^{G, N} - M_{\tau}^{G, N} \\
    & + \int_{\tau}^{\tau + \theta} 2\left(\Hprod{\pi_s^N \ast \pi_s^N, G_s(\cdot / 2)} - \Hprod{\pi_s^N, G_s}\right) + \Hprod{\pi_s^N, \partial_s G_s} \, ds.
\end{align*}

Thus, again using the maximum of $G_t$, for any $\tau, \tau + \theta \in [0, T]$, by taking the differences in equation \eqref{eq:Trajectorymartingalcompletegraph}, we have for every $N > 0$ that
\begin{align}\label{eq:secondmomentMincrement}
    \mathbb{E}\big[(M_{\tau + \theta}^{G, N} - M_{\tau}^{G, N})^2\big] 
    = \mathbb{E}\left[\int_{\tau}^{\tau + \theta} \mathcal{B}_t^{G, N} \, dt\right] 
    \leq \frac{C_1(G) \theta}{N},
\end{align}
and since $\partial_s G_s$ is also continuous, it attains a maximum on the set $[0, T] \times [-M, M]$. Furthermore, by \eqref{eq:ConvolutionNformula}, the convoluted measure can be bounded by the maximum of $G$ on the set $[0, T] \times [-2M, 2M]$. In particular, we obtain
\begin{align*}
    \int_{\tau}^{\tau + \theta} 2\left(\Hprod{\pi_s^N \ast \pi_s^N, G_s(\cdot / 2)} - \Hprod{\pi_s^N, G_s}\right) 
    + \Hprod{\pi_s^N, \partial_s G_s} \, ds \leq C_2(G) \theta,
\end{align*}
where $C_1(G)$ and $C_2(G)$ are constants depending solely on $G_t(x)$.

Therefore, using the second moment \eqref{eq:secondmomentMincrement} and applying Chebyshev's inequality, one can show that the increment of the process $\Hprod{\pi_t^N, G_t}$ in the space $D([0,T], \mathcal{M}(\R))$ is small with probability converging to one. This establishes that the trajectories $\Hprod{\pi_t^N, G_t}$ are tight.

Now, let $Q^N$ be the measure induced by the trajectory $\pi_t^N$ in $D([0,T], \mathcal{M}_+^1(\mathcal{M}_+^1([-M,M])))$. By Proposition 1.7 in \cite{Kipnis1999}, since $G_t\in C^{1,0}([0,T],\R)$, the tightness of the trajectories $\Hprod{\pi_t^N,G_t}$ over $D([0,T],\mathcal{M}(\R))$ imply tightness of the measure sequence $Q^N$ by a push forward argument.  

With tightness established in the càdlàg space, any possible limit distribution of $Q^N$ is an element of  $D([0,T],\mathcal{M}_+^1(\mathcal{S}))$ , where $\mathcal{S}=\mathcal{M}_+^1([-M,M])$. Now, let us prove that the limit measure is concentrated on trajectories whose measures are solutions of the differential measure equation
\begin{align}\label{eq:measureequation}
    \Hprod{\pi_t,G_t}= \Hprod{\nu,G_0}+ \int_0^t 2\left(\Hprod{\pi_s \ast  \pi_s,G_s(\cdot/2)}-\Hprod{\pi_s,G_s}\right)+\Hprod{\pi_s,\partial_s G_s}  ds, \quad \forall t\in[0,T],
\end{align} where $G_t\in C^{1,0}([0,T],\R)$.

Let $Q^*$ be a limit point of the subsequence $Q^{N_k}$. Since $G_t \in C^{1,0}([0,T],\mathbb{R})$, by weak convergence, for every $\varepsilon > 0$ and any trajectory $\{\pi_t, 0 \leq t \leq T\}$, we obtain
\begin{align*}
    \liminf_{k \to \infty}&\, Q^{N_k} \left( \sup_{0 \leq t \leq T} \left| \begin{aligned} \langle \pi_t, G_t \rangle &- \langle \pi_0, G_0 \rangle  -\int_0^t 2\left(\Hprod{\pi_s \ast  \pi_s,G_s(\cdot/2)}-\Hprod{\pi_s,G_s} \right)ds\\&-\int_0^t\Hprod{\pi_s,\partial_s G_s}  \, ds \end{aligned} \right| > \varepsilon \right)\\
\geq &\,Q^* \left( \sup_{0 \leq t \leq T} \left| \begin{aligned} \langle \pi_t, G_t \rangle &- \langle \pi_0, G_0 \rangle  -\int_0^t 2\left(\Hprod{\pi_s \ast  \pi_s,G_s(\cdot/2)}-\Hprod{\pi_s,G_s} \right)ds\\&-\int_0^t\Hprod{\pi_s,\partial_s G_s}  \, ds \end{aligned} \right| > \varepsilon \right),
\end{align*} since the convolution operator $\ast$ is continuous in the space of trajectories, and for a fixed $G_t \in C^{1,0}([0,T],\mathbb{R})$, the supremum is continuous with respect to the trajectory. 

Considering the definition of the configuration at time zero and applying the strong law of large numbers, it is simple to show that
\begin{align*}
    \lim_{N \to \infty} \Hprod{\pi_0^N, G_0} = \Hprod{\nu,G_0}.
\end{align*}Moreover, if $\pi_t^N$ converges weakly to  $\pi_t$, we have that $\pi_t^N\ast \pi_t^N$ also converges weakly to $\pi_t\ast \pi_t$.  Then, applying Chebyshev's and Doob's inequalities, one gets
\begin{align} \label{eq:DoobClick}
 Q^N \left[ \sup_{0 \leq t \leq T} |M_t^{G,N}| \geq \epsilon \right] \leq 4 \epsilon^{-2} \mathbb{E}_{Q^N} \left[ \int_0^T \mathcal{B}_s^{G,N} \, ds \right] \leq \frac{C(G) T}{\epsilon^2 N}.
\end{align} This implies that every limiting measure $\pi_t$ must satisfy the differential measure equation \eqref{eq:measureequation}.

\subsection*{Uniqueness of the measure equation}For each fixed time $t \in [0,T]$, the existence of at least one solution to equation \eqref{eq:measureequation} can be guaranteed by the tightness of the trajectories $(\pi_t^N)_N$. Uniqueness, on the other hand, is a separate issue.

\begin{proposition}\label{Prop:Uniqueness}
    Consider any measure $\nu \in \mathcal{M}_+^1([-M,M])$. Then there exists a unique trajectory of measures $\{\pi_t: 0\leq t\leq T\}$ in $D([0,T],\mathcal{M}_+^1([-M,M]))$ that satisfies
    \begin{align}\label{Eq:equaçãodemedida}
        \begin{cases}
            \Hprod{\pi_t,G_t}= \Hprod{\nu,G_0}+ \int_0^t 2\left(\Hprod{\pi_s \ast  \pi_s,G_s(\cdot/2)}-\Hprod{\pi_s,G_s}\right)+\Hprod{\pi_s,\partial_s G_s}  ds,\,
            \forall t\in[0,T];\\
        \pi_0=\nu, 
        \end{cases}
    \end{align}
    for all $G_t\in C^{1,0}([0,T],\R)$.
\end{proposition}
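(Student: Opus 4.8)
The plan is to establish uniqueness by a Grönwall argument carried out in the Kantorovich–Rubinstein (Wasserstein-$1$) metric. Because the measures are supported on the compact interval $[-M,M]$, the distance $W_1(\mu,\mu')=\sup\{|\Hprod{\mu,G}-\Hprod{\mu',G}|\colon G\text{ is }1\text{-Lipschitz}\}$ metrizes the weak topology, and every $1$-Lipschitz function on $[-M,M]$ is continuous, hence an admissible time-independent test function in \eqref{Eq:equaçãodemedida}. Suppose $\{\pi_t\}$ and $\{\tilde\pi_t\}$ both solve \eqref{Eq:equaçãodemedida} with $\pi_0=\tilde\pi_0=\nu$. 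For a time-independent $1$-Lipschitz $G$ the term $\Hprod{\pi_s,\partial_s G_s}$ vanishes, and subtracting the two instances of the equation gives $\Hprod{\pi_t-\tilde\pi_t,G}=\int_0^t 2\bigl(\Hprod{\pi_s\ast\pi_s-\tilde\pi_s\ast\tilde\pi_s,G(\cdot/2)}-\Hprod{\pi_s-\tilde\pi_s,G}\bigr)\,ds$.

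The crux is controlling the quadratic convolution term, and here I would use the bilinearization $\pi_s\ast\pi_s-\tilde\pi_s\ast\tilde\pi_s=(\pi_s-\tilde\pi_s)\ast\pi_s+\tilde\pi_s\ast(\pi_s-\tilde\pi_s)$. Writing $H=G(\cdot/2)$, which is $\tfrac12$-Lipschitz whenever $G$ is $1$-Lipschitz, the map $x\mapsto H(x+y)$ is $\tfrac12$-Lipschitz for each fixed $y$, so duality yields $\bigl|\int H(x+y)\,d(\pi_s-\tilde\pi_s)(x)\bigr|\le \tfrac12 W_1(\pi_s,\tilde\pi_s)$; integrating the frozen variable against the probability measure $\pi_s$, whose total mass is $1$, gives $|\Hprod{(\pi_s-\tilde\pi_s)\ast\pi_s,H}|\le\tfrac12 W_1(\pi_s,\tilde\pi_s)$, and the second bilinear term is bounded identically. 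The linear term satisfies $|\Hprod{\pi_s-\tilde\pi_s,G}|\le W_1(\pi_s,\tilde\pi_s)$ directly.

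Setting $w(s)=W_1(\pi_s,\tilde\pi_s)$, these estimates combine to give, for every fixed $1$-Lipschitz $G$, the bound $|\Hprod{\pi_t-\tilde\pi_t,G}|\le 4\int_0^t w(s)\,ds$. Since the right-hand side is independent of $G$, taking the supremum over $1$-Lipschitz $G$ produces the closed inequality $w(t)\le 4\int_0^t w(s)\,ds$. The càdlàg regularity of the trajectories makes $s\mapsto w(s)$ measurable and bounded by $2M$, so Grönwall's inequality together with $w(0)=0$ forces $w\equiv0$, i.e.\ $\pi_t=\tilde\pi_t$ for all $t\in[0,T]$. Existence of at least one solution has already been supplied by the tightness of $(\pi_t^N)_N$, so this establishes the unique trajectory in $D([0,T],\mathcal M_+^1([-M,M]))$.

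I expect the main obstacle to be the quadratic nonlinearity: the convolution term is not a contraction a priori, and the argument hinges on (i) the bilinearization splitting and (ii) the fact that the measures are probabilities, so the frozen factor contributes only a unit mass rather than an uncontrolled constant. A secondary point to check is that restricting to time-independent Lipschitz test functions loses nothing, which holds because continuous functions separate measures on $[-M,M]$, so that $w(t)=0$ already forces equality of the measures themselves.
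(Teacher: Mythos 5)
Your proof is correct, and it takes a genuinely different route from the paper's. The paper does not use the Wasserstein metric: it introduces the standard-deviation functional $\|\pi\|=\sup\{\sqrt{\Var{\Hprod{\pi,G}}}\colon G\in C(\R),\ \|G\|_{\infty}=1\}$ on the space $\mathcal{M}_+^1(\mathcal{M}_+^1([-M,M]))$ of \emph{random} measures, rewrites the quadratic term via $\pi^1_s\ast\pi^1_s-\pi^2_s\ast\pi^2_s=(\pi^1_s+\pi^2_s)\ast(\pi^1_s-\pi^2_s)$, bounds the convolution using that the frozen factor has total mass at most $2$, and closes a Gr\"{o}nwall loop with constant $6$; it then applies the same estimate to two independent samples of the limiting law to conclude that all variances vanish, i.e.\ that the limit is concentrated on a single deterministic trajectory. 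Your bilinearization $(\pi-\tilde\pi)\ast\pi+\tilde\pi\ast(\pi-\tilde\pi)$ is the same algebraic identity in another guise, and your Kantorovich--Rubinstein bound (the $\tfrac12$-Lipschitz constant of $G(\cdot/2)$ together with the unit mass of the frozen probability factor) plays exactly the role of the paper's estimate $F_s(y)\leq 2\|G\|_{\infty}$; the resulting Gr\"{o}nwall constants ($4$ versus $6$) are immaterial. What your version buys is that the $W_1$ distance genuinely separates two distinct \emph{deterministic} trajectories, which is what the proposition literally asserts, whereas the paper's variance functional vanishes on deterministic measures and is really tailored to the concentration step; conversely, your argument recovers that concentration for free, since the limit law is supported on solutions of \eqref{eq:measureequation} with initial datum $\nu$ and uniqueness among deterministic trajectories then forces it to be a Dirac mass. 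The hygiene points you flag --- admissibility of time-independent Lipschitz test functions inside $C^{1,0}([0,T],\R)$, and measurability of $s\mapsto W_1(\pi_s,\tilde\pi_s)$ along c\`adl\`ag paths --- are handled correctly.
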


\begin{proof}
    Existence is clarified in the previous subsection, so we only have to deal with uniqueness. The proof exploits Gr\"{o}nwall's inequality. For any distribution $\pi\in\mathcal{M}_{+}^1 \left( \mathcal{M}_+^1([-M,M]) \right)$, we introduce the norm 
    \begin{align*}
        \|\pi\|=\sup\left\{\sqrt{\Var{\Hprod{\pi,G}}}: G\in C(\R)\text{  s.t. } \|G\|_{\infty}=1\right\}.
    \end{align*}
    Since, the standard deviation is a norm in the space of random variables, it is easy to see that $\|.\|$ is a norm over $\mathcal{M}_+^1(\mathcal{M}^1_+([-M,M]))$. 
    
   Suppose that $\{\pi_t^1: 0\leq t\leq T\}$ and $\{\pi_t^2: 0\leq t\leq T\}$ are two solutions of equation \eqref{Eq:equaçãodemedida}. Thus, for every $t\in[0,T]$, one gets for every $G_t\in C^{1,0}([0,T],\R)$ that
    \begin{align}\label{eq:equaçãonadiferença}
        \Hprod{\pi_t^1-\pi_t^2,G_t}= 2\int_0^t \Hprod{(\pi_s^1+\pi_s^2)\ast(\pi_s^1-\pi_s^2),G_s(\cdot/2)}-\Hprod{\pi_s^1-\pi_s^2,G_s-\partial G_s}ds.
    \end{align} 
    
    By applying the standard deviation norm in the equation \eqref{eq:equaçãonadiferença}, 
    we get
    \begin{align*}
        \sqrt{\Var{\Hprod{\pi_t^1-\pi_t^2,G}}}\leq  2\int_0^t \sqrt{\Var{\Hprod{(\pi_s^1+\pi_s^2)\ast(\pi_s^1-\pi_s^2),G(\cdot/2)}}}+\sqrt{\Var{\Hprod{\pi_s^1-\pi_s^2,G}}} ds.
    \end{align*} Since the trajectories are distributions over $\mathcal{M}_+^1([-M,M])$, we have $(\pi_s^{1}+\pi_s^{2})(\mathbb{R}) \leq 2$. 
    Thus, defining
\begin{align*}
    F_s(y) = \int_{\R} G\left(\frac{x+y}{2}\right) (\pi_s^1+\pi_s^2)(dx) \leq 2\|G\|_{\infty},
\end{align*}  
and maximizing over functions $G \in C(\mathbb{R})$ such that $\|G\|_{\infty} = 1$, we obtain  
\begin{align*}
    \|\pi_t^1 - \pi_t^2\| \leq 6 \int_0^t \|\pi_s^1 - \pi_s^2\| ds.
\end{align*}  
Applying Gr\"{o}nwall's inequality (see Appendix B, Section k of \cite{Evans2010}), this implies that  
\begin{align*}
    \|\pi_t^1 - \pi_t^2\| \leq \|\pi_0^1 - \pi_0^2\| e^{6t}.
\end{align*}  
Since both distributions are initially equal to $\nu$ at time zero, we conclude that $\pi_t^1=\pi_t^2$ for \emph{fixed} $t\in[0,T]$.

Now, consider the trajectory $\{\pi_t, 0 \leq t \leq T\}$ that satisfies equation \eqref{Eq:equaçãodemedida}, and let $\pi_t^1$ and $\pi_t^2$ be two independent samples from the same distribution. By the same argument, we obtain that  
$\Var{\Hprod{\pi_t^1 - \pi_t^2, G}} = 0$ for all $G \in C^0(\mathbb{R})$, which implies that $\{\pi_t\colon 0 \leq t \leq T\}$ is not a trajectory of distributions in $\mathcal{M}_+^1(\mathcal{M}_+^1([-M,M]))$, but rather a trajectory of measures belonging to $\mathcal{M}_+^1([-M,M])$, completing the proof.
\end{proof}

\subsection*{Absolutely continuous limit} The measure equation \eqref{eq:measureequation} is a powerful tool for understanding the averaging process. However, solving and analysing a differential measure equation is not a trivial task. Therefore, in this Section, assuming the existence of a measure trajectory $\{\pi_t: 0 \leq t \leq T\}$, we will use a probabilistic argument to show that if $\nu$ is absolutely continuous, then $\pi_t$ is also absolutely continuous. In particular, in this case, the density of $\pi_t$ will satisfy a differential equation \eqref{eq:convolution-differential-equation}.

For a fixed value of $t > 0$, we aim to establish that the limit of $\pi_t^N$ is absolutely continuous with respect to the Lebesgue measure. The proof relies on a quantitative argument that estimates the number of opinions that, at time $t$, end up in any given small interval on the real line. Our goal is to show that this number is small and thus show that $\pi_t$ is diffusive.

To achieve this, we divide the initial opinions into two groups: an arbitrary small group of vertices that changes its opinion a lot of times until $t$, and a second group which represents the majority of the vertices and consists of those that do not change their opinions significantly many times. We prove that this second group cannot have a sharp concentration, since its  opinions do not move enough. Moreover, since the group of vertices that frequently change opinions can be made arbitrarily small, no set will have a large concentration of final opinions at time $t$, implying that the limiting measure is absolutely continuous.

The first step in this proof is to control how much an opinion changes up to time $t$. To this end, define $\xi_t(x)$ as the number of times the opinion of $x$ has attempted to change up to time $t$, that is, the number of times $x$ interacts with another vertex. Then, consider
    \begin{align}
        \label{eq:X_j^t} X_j^t&=\sum_{x\in [N]} \ind\{\xi_t(x)=j\},
    \end{align} 
    which counts the number of vertices whose opinion changes exactly $j$ times up to time $t$.

    \begin{lemma} \label{Lem:X_jconcentration}
        There exists $N_0 = N_0(t)$ such that for every $N > N_0$ and every $j > 0$, one can find a constant $C = C(j,t)$ such that, for every $\alpha > 0$, we have
\begin{align*}
    \mathbb{P} \left( \left| X_j^t - N \frac{(2t)^{j}}{j!} \left(1-\frac{1}{2N}\right)^{j}e^{-2t\left(1-\frac{1}{2N}\right)} \right| \geq \alpha \sqrt{N} \right) \leq \frac{C}{\alpha^2}.
\end{align*} 
    \end{lemma}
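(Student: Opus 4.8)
The plan is to deduce the inequality from Chebyshev's inequality, which reduces the entire statement to the single variance estimate $\Var{X_j^t}\le C(j,t)\,N$. Indeed, once this is in hand, Chebyshev gives $\P{\,|X_j^t-\Ex{X_j^t}|\ge \alpha\sqrt N\,}\le \Var{X_j^t}/(\alpha^2 N)\le C/\alpha^2$, and the only remaining point is to verify that the centering constant displayed in the statement is exactly $\Ex{X_j^t}$.

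First I would set up the graphical construction behind the generator \eqref{eq:GeneratorClick}: attach to every ordered pair $(u,v)\in[N]^2$ an independent rate-$1/N$ Poisson clock, and let $\xi_t(x)$ increase by one whenever a clock in $S_x:=\{(x,v):v\in[N]\}\cup\{(u,x):u\neq x\}$ rings. Since $|S_x|=2N-1$, the counter $\xi_t(x)$ is Poisson of rate $\lambda_N:=(2N-1)/N=2(1-\tfrac1{2N})$, so that $\P{\xi_t(x)=j}=\frac{(\lambda_N t)^j}{j!}e^{-\lambda_N t}=\frac{(2t)^j}{j!}\left(1-\tfrac1{2N}\right)^je^{-2t(1-\frac1{2N})}$. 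By symmetry and linearity of expectation, $\Ex{X_j^t}=N\,\P{\xi_t(x)=j}$, which is precisely the centering in the lemma.

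For the variance I would expand $\Var{X_j^t}=\sum_{x}\Var{\ind\{\xi_t(x)=j\}}+\sum_{x\neq y}\mathrm{Cov}\!\left(\ind\{\xi_t(x)=j\},\ind\{\xi_t(y)=j\}\right)$. The diagonal part is at most $N/4$. The crux is to control the $\Theta(N^2)$ off-diagonal covariances, and I claim each is $O(1/N)$. For $x\neq y$ the clock families overlap in exactly the two shared clocks $S_x\cap S_y=\{(x,y),(y,x)\}$, so I would write $\xi_t(x)=A_t+C_t$ and $\xi_t(y)=B_t+C_t$, where $A,B,C$ are independent Poisson processes of rates $(2N-3)/N$, $(2N-3)/N$ and $2/N$ respectively. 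On the event $\{C_t=0\}$ the counters $\xi_t(x),\xi_t(y)$ coincide with the independent surrogates $A_t,B_t$; comparing the joint indicator and the product of the marginals with these surrogates, and using $\P{C_t\ge1}=1-e^{-2t/N}\le 2t/N$, bounds the covariance by $3\,\P{C_t\ge1}\le 6t/N$. Summing then yields $\Var{X_j^t}\le N/4+N(N-1)\cdot 6t/N\le C(t)\,N$, which closes the Chebyshev argument.

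The main obstacle is exactly this covariance bound: since there are $\Theta(N^2)$ pairs, the variance estimate — and hence concentration at scale $\sqrt N$ — fails unless each covariance decays like $1/N$. This decay is not a consequence of the Poisson marginals alone; it rests on the structural observation that two distinct vertices share only two of their $\Theta(N)$ Poisson clocks, so their interaction counts are genuinely independent outside an event of probability $O(1/N)$. Making this precise, by controlling both the joint indicator and the product of the marginals against their independent $\{C_t=0\}$ surrogates, is the technical heart of the proof. (The hypothesis $N>N_0(t)$ is needed only to absorb the lower-order terms into a clean constant; the Chebyshev step itself is valid for every $N\ge 2$.)
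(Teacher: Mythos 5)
Your proposal is correct and follows essentially the same route as the paper: a second-moment/Chebyshev argument in which the Poisson clock construction identifies the centering constant as $\mathbb{E}[X_j^t]$ exactly, and the off-diagonal terms are controlled by decoupling $\xi_t(x)$ and $\xi_t(y)$ outside the event that one of the two shared clocks on the pair $\{x,y\}$ rings, an event of probability at most $2t/N$. The paper packages this as a direct bound on $\mathbb{E}[(X_j^t)^2]$ by conditioning on $\Xi_{\{1,2\}}^t$ (which is why it introduces $N_0(t)$ to absorb the resulting error terms), whereas your covariance bound $3\,\mathbb{P}(C_t\geq 1)\leq 6t/N$ is the same estimate in a slightly cleaner form, valid for all $N\geq 2$ and with a constant independent of $j$.
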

\begin{proof} 
The proof uses a second moment argument. 
Since, any vertex $x$ belongs to $2N-1$ pairs, at time $t$, the random variable $\xi_t(x)$ is a Poisson random variable with parameter equal to $2t\left(1-\frac{1}{2N}\right)$. 
Specifically,
    \begin{align*}
        \Ex{X_j^t}&=N \P{\xi_t(1)=j}= N \frac{(2t)^j}{j!} \left(1-\frac{1}{2N}\right)^j e^{-2t\left(1-\frac{1}{2N}\right)}.
    \end{align*}
    In the averaging process, the opinion of each pair $\{x,y\}$ changes according to an exponential random variable with rate $2/N$. Let $\Xi_{\{x,y\}}^t$ denote the number of times the pair $\{x,y\}$ attempts to change its state up to time $t$. By this observation, this random variable follows a Poisson distribution with rate $2t/N$. Then, we have
    \begin{align*}
    \Ex{(X_j^t)^2}&=\sum_{x,y\in [N]} \P{\xi_t(x)=j, \xi_t(y)=j}
    =\Ex{X_j^t} + N(N-1)\P{\xi_t(1)=j,\xi_t(2)=j}\\
    &=\Ex{X_j^t} + N(N-1)\sum_{\ell=0}^j \P{\xi_t(1)=j,\xi_t(2)=j, \Xi_{\{1,2\}}^t=\ell}.
    \end{align*}
    Removing the edge $\{1,2\}$, the variables $\xi_t(1)$ and $\xi_t(2)$ are two independent Poisson random variables with parameter $2t\left(1-\frac{3}{2N}\right)$. This yields
    \begin{align*}
    \Ex{(X_j^t)^2}&\leq \Ex{X_j^t} + N(N-1)\left( \left(\left(2t\left(1-\frac{3}{2N}\right)\right)^{j}\frac{e^{-2t\left(1-\frac{3}{2N}\right)}}{j!}\right)^2 e^{-\frac{2t}{N}} + \P{\Xi_{\{1,2\}}^t>0}\right)\\
    &=\Ex{X_j^t} + N(N-1)\left( \left(\left(2t\left(1-\frac{3}{2N}\right)\right)^{j}\frac{e^{-2t\left(1-\frac{3}{2N}\right)}}{j!}\right)^2 e^{-\frac{2t}{N}} +(1-e^{-\frac{2t}{N}})\right).
\end{align*}
With $t$ fixed, there exists $N_0 = N_0(t)$ (a value that does not depend on the number of movements $j$) such that, for every $N > N_0$ and any $j \geq 0$, we have
\begin{align*}
    N(N-1)(1-e^{-\frac{2t}{N}}) &\leq 4t(N-1), \\
    N^2 \left(\left(2t\left(1-\frac{3}{2N}\right)\right)^{j}\frac{e^{-2t\left(1-\frac{3}{2N}\right)}}{j!}\right)^2 e^{-\frac{2t}{N}} 
    &- N^2 \frac{(2t)^{2j}}{(j!)^2}\left(1-\frac{1}{2N}\right)^{2j} e^{-4t\left(1-\frac{1}{2N}\right)} \leq 2t N \frac{(2t)^{2j}}{(j!)^2} e^{-4t}.
\end{align*}
In particular, for every $j$ and $N > N_0$, we find that $\mathbb{E}[X_j^t]$ and $\mathrm{Var}(X_j^t)$ are of order $N$. Then, the claim follows by Chebyshev's inequality.  
\end{proof}

Note that the mass of the measure $\pi_t^N$ is concentrated at the points $\omega_t$, while in equation \eqref{eq:EmpiricalmeasureinTd}, the measure $\hatpi_t^N$ represents a weighted distribution on the torus. This seemingly small difference has significant consequences for the hydrodynamic limit equation. For instance, the solution to the heat equation smooths discontinuities, while the measure $\pi_t^N$  in the complete graph can amplify them even further. In particular, by Lemma \ref{Lem:X_jconcentration}, when starting from an initial configuration where the opinions are independently distributed on $\{0,1\}$ according to a Bernoulli distribution, the limiting distribution of $\pi_t^N$ assigns positive probability to all dyadic numbers, resulting in a measure that is not absolutely continuous in the limit; see Figure \ref{fig:SimulationBer}.

For this specific reason, we require the extra conditions of the density $\nu$, that is $\nu$ is absolutely continuous with respect the Lebesgue measure $\lambda$. 

\begin{proposition}\label{Prop:Abscontinuous}
    Let $\nu\ll \lambda$ have compact support $[-M,M]$. Then for every fixed $t>0$, the limit measure $\pi_t$ of the sequence $(\pi_t^N)_N$ is also absolutely continuous w.r.t.\ Lebesgue measure. 
\end{proposition}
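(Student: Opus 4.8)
The plan is to verify the $(\varepsilon,\delta)$-characterization of absolute continuity for the \emph{deterministic} limit $\pi_t$: for every $\varepsilon>0$ there is $\delta>0$ such that every Borel set $A$ with $\lambda(A)<\delta$ satisfies $\pi_t(A)\leq\varepsilon$ (for a finite measure this is equivalent to $\pi_t\ll\lambda$). By Proposition \ref{Prop:Uniqueness} the limit $\pi_t$ is a single measure, so $\langle\pi_t^N,f\rangle\to\langle\pi_t,f\rangle$ (in probability) for bounded continuous $f$. Approximating the indicator of an open set $U$ from below by such $f$ and taking expectations reduces the task to a uniform-in-$N$ bound on the expected empirical mass $\mathbb{E}[\pi_t^N(U)]$ of small open sets, namely $\pi_t(U)\leq\limsup_N\mathbb{E}[\pi_t^N(U)]$. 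By exchangeability of the vertices, $\mathbb{E}[\pi_t^N(U)]=\P{\omega_t(1)\in U}$, so everything reduces to a single-vertex estimate.

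The key structural observation is that the pair-selection dynamics is independent of the opinion values: opinions are transported and averaged passively and never influence which pairs ring. Hence, conditionally on the entire interaction history $\mathcal I$ (the Poisson clocks of all pairs) and on the initial opinions $\{\omega_0(y):y\neq 1\}$ of all other vertices, the final opinion of vertex $1$ is a deterministic affine function of $\omega_0(1)$,
$$\omega_t(1)=c\,\omega_0(1)+b,$$
where $c=c(\mathcal I)\in[0,1]$ is the (interaction-determined) weight of $\omega_0(1)$, while $b$ depends on $\mathcal I$ and the other initial opinions but not on $\omega_0(1)$. Tracking the self-weight through the interactions of vertex $1$ shows that at each of its interactions the weight is replaced by $\tfrac12(\text{old weight})+\tfrac12(\text{partner's weight})\geq\tfrac12(\text{old weight})$, whence $c\geq 2^{-\xi_t(1)}$.

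I then split according to whether vertex $1$ changes many or few times. Fix $J$ with $\P{\mathrm{Poisson}(2t)>J}<\varepsilon$; since $\xi_t(1)$ is Poisson of parameter $2t(1-\tfrac{1}{2N})$, this controls $\P{\xi_t(1)>J}$ uniformly for large $N$ (this is the small group of frequently-updating vertices, whose total mass is also controlled pathwise by Lemma \ref{Lem:X_jconcentration}). On the complementary event $\{\xi_t(1)\leq J\}$ one has $c\geq 2^{-J}$, so conditionally the law of $\omega_t(1)$ is the push-forward of $\nu$ under an affine map of slope $c$, giving $\P{\omega_t(1)\in U\mid\mathcal I,\text{rest}}=\nu\big(c^{-1}(U-b)\big)$ with $\lambda\big(c^{-1}(U-b)\big)=\lambda(U)/c\leq 2^{J}\lambda(U)$. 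Because $\nu\ll\lambda$ is finite, it is uniformly absolutely continuous, so choosing $\delta$ with $\lambda(\cdot)<2^{J}\delta\Rightarrow\nu(\cdot)<\varepsilon$ yields $\P{\omega_t(1)\in U,\ \xi_t(1)\leq J}\leq\varepsilon$ whenever $\lambda(U)<\delta$. Adding the two parts gives $\mathbb{E}[\pi_t^N(U)]\leq 2\varepsilon$, hence $\pi_t(U)\leq 2\varepsilon$; outer regularity of $\lambda$ upgrades this from open sets to all Lebesgue-null sets, proving $\pi_t\ll\lambda$.

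The main obstacle will be making the affine representation and the slope bound $c\geq 2^{-\xi_t(1)}$ rigorous — in particular, being careful that although $\omega_0(1)$ may feed back into the system and return to vertex $1$ (so that $c$ can exceed $2^{-\xi_t(1)}$), the decomposition of $\omega_t(1)$ into the interaction-determined weights $\{c_y(\mathcal I)\}$ is exact and the feedback only increases $c$, never pushing it below $2^{-\xi_t(1)}$. The remaining subtlety is the passage from the expected empirical mass to the deterministic limit, which hinges on $\pi_t$ being a single measure rather than a genuine distribution over measures — precisely the content of Proposition \ref{Prop:Uniqueness}.
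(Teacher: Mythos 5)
Your proposal is correct, and it rests on the same core mechanism as the paper's proof: split vertices according to whether they interact more or fewer than $j$ times, use the fact that after at most $j$ interactions the final opinion retains a weight of at least $2^{-j}$ on the initial opinion (so small target sets pull back to sets of Lebesgue measure at most $2^{j}\lambda(U)$), and finish with uniform absolute continuity of $\nu$. The implementation, however, is genuinely different and in places cleaner. The paper argues by contradiction on a nested sequence of intervals, invokes the second-moment concentration of Lemma \ref{Lem:X_jconcentration} to control the count $X^t_{>j}$ pathwise, and then uses a perturbation estimate \eqref{eq:pertubedboundmovement0} together with a decomposition of $\mathcal{I}$ into disjoint intervals and the claim that the map $\delta\mapsto\widehat{\omega}^\delta_t$ sends intervals to intervals, before closing with Markov's inequality. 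You instead use exchangeability to reduce everything to the single-vertex quantity $\Ex{\pi^N_t(U)}=\P{\omega_t(1)\in U}$, which needs only a first-moment (Poisson tail) bound on $\xi_t(1)$ and makes Lemma \ref{Lem:X_jconcentration} unnecessary; and you replace the perturbation argument by the exact linear representation $\omega_t=P_t\omega_0$ with doubly stochastic $P_t$ determined by the clock history, from which $c=P_t(1,1)\geq 2^{-\xi_t(1)}$ and the preimage bound $\lambda\bigl(c^{-1}(U-b)\bigr)\leq 2^{J}\lambda(U)$ follow for arbitrary Borel $U$, with no need to reduce to intervals. What the paper's route buys in exchange is a pathwise high-probability statement about the random measures $\pi^N_t$ themselves rather than their expectations, but that extra strength is not needed for the proposition, since by Proposition \ref{Prop:Uniqueness} the limit is a single deterministic measure and the portmanteau/inner-regularity step you describe suffices. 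The one technical point worth flushing out in your write-up is the passage from trajectory-level weak convergence to convergence of $\Ex{\langle\pi^N_t,f\rangle}$ at the fixed time $t$ (which holds because the limit trajectory is continuous in $t$), but this is a point the paper glosses over as well.
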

\begin{proof}
 By Proposition \ref{Prop:Uniqueness}, for every $t\in [0,T]$ there exists a unique limit measure $\pi_t$ for the random measures $\pi_t^N$, our goal is to show that $\pi_t$ is absolutely continuous with respect to Lebesgue measure. Consider any nested union of open intervals $ (\mathcal{I}_k)_k \subset [-M, M] $ such that
     \begin{align}\label{eq:Limitlebesguemeasuretozero}
         \lim_{k\to \infty} \lambda(\mathcal{I}_k)=0.
     \end{align} We aim to prove that $\lim_{k \to \infty} \pi_t(\mathcal{I}_k) = 0$. Consequently, since the set of intervals is nested, it follows that for every set $A$ with Lebesgue measure zero, we also get $\pi_t(A) = 0$ almost surely, concluding the proof. 
    
    We now proceed with an indirect argument and assume that there exists a nested union of open intervals $(\mathcal{I}_k)_k$ satisfying condition \eqref{eq:Limitlebesguemeasuretozero}, but the measure $\pi_t$ converges to a positive value, that is,
    \begin{align}\label{eq:Limitmeasureintervalpositive}
    \lim_{k\to \infty} 
 \pi_t(\mathcal{I}_k)>\eta>0.
    \end{align} In other words, if \eqref{eq:Limitmeasureintervalpositive} happens, fixing any set $\mathcal{I}$ of the nested sequence $(\mathcal{I}_k)_k$, when $N$ is large, the random measure $\pi_t^N$ with high probability places more than $\eta N$ opinions in $\mathcal{I}$.  To demonstrate that this is not the case, we need a quantitative argument that provide bounds on the number of opinions in $\mathcal{K}_N$ that belong to sets $\mathcal{I}$ with small Lebesgue measure at time $t$. 

The quantitative argument consists of two main steps. First, we divide the set of opinions into two categories: those that change more than $j$ times and those that do not. The first step of the proof addresses the points that change more than $j$ times, while the second step calculates the probability of points that do not change as frequently. Specifically, consider
    \begin{align}
    X_{>j}^t&=\sum_{x\in [N]} \ind\{\xi_t(x)>j\},\nonumber
    \end{align} the total number of opinions that that changes more than $j$ times. As a key observation, for every fixed $j$, by Lemma \ref{Lem:X_jconcentration}, since the values of $\{X_0^t, \ldots, X_j^t\}$ are concentrated for every $N>N_0(t)$, the remainder of the $N$ particles that move more than $j$ times also exhibit concentration. Therefore, $X_{>j}^t$ fluctuates around its mean. In particular, one can find $j=j(\eta)$, such that $\P{\mathrm{Poi}(t)>j}<\frac{\eta}{2}$, and then 
    \begin{align}\label{eq:limtX_>j}
            \lim_{N \to \infty} \P{X_{>j}^t < \frac{\eta}{2} N} = 1.
    \end{align} In our construction, assume that whenever an opinion moves more than $j$ times, it will end up in the final set $\mathcal{I}$ thereby increasing the value $\pi(\mathcal{I})$ by $\frac{\eta}{2}$. 

To handle the points that do not move as much, we now consider the value of $\lambda(\mathcal{I})$ and the fact that the final opinion of a vertex, when it changes only a finite number of times, is sensible to initial perturbations. The proof proceeds by establishing a bound on the probability that a particle ends up in $\mathcal{I}$. This bound relies on associating each set $\mathcal{I}$ and each particle $x \in [N]$ with a random set $\widehat{\mathcal{I}}$, such that if $\omega_0(x) \in \widehat{\mathcal{I}}$, then $\omega_t(x) \in \mathcal{I}$. The probability bound essentially depends on the fact that $\lambda(\widehat{\mathcal{I}})$ is small and that $\nu$ is absolutely continuous.

  Let $\delta \in \R$ and fix any $x \in [N]$. Define an initial perturbed configuration at $x$ as $(\widehat{\omega}_0^{\delta}(y))_{y\in [N]}$, where $\widehat{\omega}_0^\delta(x) = \omega_0(x) + \delta$ and $\widehat{\omega}_0^\delta(y) = \omega_0(y)$ for every $y \in [N] \setminus \{x\}$. By counting the number of times that the opinion $\omega_t(x)$ changes, for instance $j$ times, we have the following bounds
    \begin{align}\label{eq:pertubedboundmovement0}
        \begin{cases}
            \widehat{\omega}_t^\delta(x)\geq \omega_t(x) + \frac{\delta}{2^j}, &\text{ if } \delta>0;\\
            \widehat{\omega}_t^\delta(x)\leq \omega_t(x) + \frac{\delta}{2^j}, &\text{ if } \delta<0. 
        \end{cases}
    \end{align} In particular, for every $\e> 0$ fixed and interval $(a,b)$ with measure $\e$, if the opinion of the vertex $x$ changes less than $j = -\log_2\left(\frac{\e}{\delta}\right)$ times, then perturbations of the initial opinion can move the opinion $\omega_t(x)$ a distance greater than $\e>0$, and thus possibly outside the interval $(a,b)$. With this argument, one can, at time $t$, relate the probability of opinion $\omega_t(x)$ with the probability at time zero, where the opinions are independent. 
    
    Fixing $t$, consider an arbitrary vertex $x$ such that $\{\xi_t(x)\leq j\}$ happens, and examine all its possible outcomes after an initial perturbation of size $c\in(-\delta,\delta)$. Any possible perturbed initial opinion $\widehat{\omega}_0^c$ of that vertex $x$, belongs to the interval $B_{\omega_0(x)}(\delta)$, and, using \eqref{eq:pertubedboundmovement0}, $\widehat{\omega}_t^c$ belongs after time $t$ to the set
    \begin{align*}
         \left\{\widehat{\omega}_t^c(x): c\in(-b,b)\right\} \supseteq \left( \omega_t(x) - \frac{\delta}{2^j}, \omega_t(x) + \frac{\delta}{2^j} \right).
    \end{align*} 
    Since the process is Feller, small changes in the initial distribution result in small changes in the outcome. This ensures that intervals are mapped to intervals, in the map $\delta\mapsto \widehat{\omega}^\delta_t$ . For more on this property, see the discussion in \cite{Gantert2020}.

\begin{figure}[ht!]
    \centering
    \includegraphics[scale=.7]{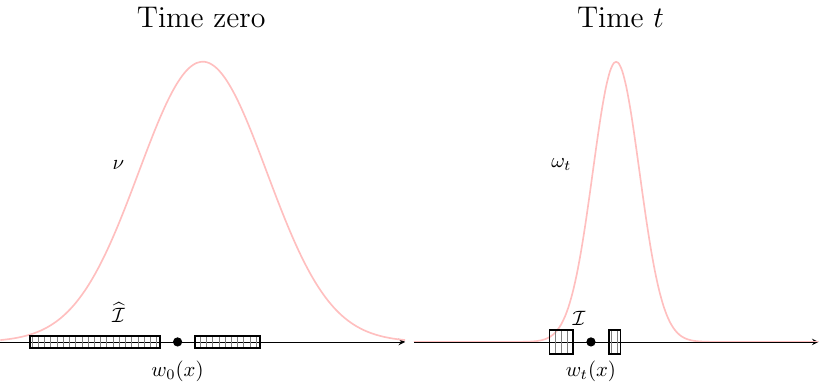}
   \caption{Side by side, the initial distribution of opinions at time zero and at time $ t $. The set $ \mathcal{I} $ at time $ t $ is related to the set $ \widehat{\mathcal{I}} $ at time zero. In this case, intervals are mapped to intervals, and using perturbation estimates, one can derive bounds for the Lebesgue measure of the set $\widehat{\mathcal{I}}$.
}
    \label{fig:Initialandendsets}
\end{figure}

    To compute the contribution of the set of points that change less than $j$ times to the final value of $\pi(\mathcal{I})$, we shift the problem at time $t$ to a problem at time zero, where the opinions are independent. First, for any set $\mathcal{I}$ and vertex $x$, consider the random set $\widehat{\mathcal{I}}= \widehat{\mathcal{I}}(\omega,\mathcal{I})=\{\omega_0(x): \omega_t(x)\in \mathcal{I}\}$ of possible starting values when we require to be in $\mathcal I$ after time $t$.
    
    Since $(\mathcal{I}_k)_k$ is a nested union of open interval, without loss of generality one can find a disjoint union of open intervals where $\mathcal{I}=\bigcup_{i=1}^\infty (a_i,b_i)$. Then, for each interval of the union, for instance $(a_1,b_1)$ with total measure $|b_1-a_1|=\e_1<\e=\lambda(\mathcal{I})$, one can use the perturbation argument, to find an interval $(\widehat{a_1},\widehat{b_1})$ at time zero, with total measure smaller than $2^j \e_1$, such that if $\omega_0(x)\in (\widehat{a_1},\widehat{b_1})$ then $\omega_t(x)\in (a_1,b_1)$.  By a simple construction, for each vertex $x\in [N]$ such that $\{\xi_t(x)<j\}$, moving the initial opinion around, one can show that $\lambda(\widehat{\mathcal{I}})\leq \e 2^j$. See in Figure \ref{fig:Initialandendsets}, a representation of the problem at time $t$ and at time zero.
     
    Observe that the initial opinion $\omega_0(x)$ is independent of the set $\widehat{\mathcal{I}}$. Since $\nu$ is absolutely continuous with respect to Lebesgue measure, for every $\eta > 0$, $\ell>0$, there exists an $\e = \e(\eta, \nu, j,\ell) > 0$ such that for every measurable set $A$ with $\lambda(A) < 2^j \e$, we have $\nu(A) < \frac{\eta}{2\ell}$. In particular, it is true that
    \begin{align*}
        \P{\omega_t(x)\in \mathcal{I}|\xi_t(x)<j}&= \P{\omega_0(x)\in \widehat{\mathcal{I}}|\xi_t(x)<j}<\frac{\eta}{2\ell}.
    \end{align*}

    Then, assuming all points that move more than $j$ times end up in the limiting set, by the limit of equation \eqref{eq:limtX_>j}, for any $\ell>0$, by Markov's inequality, for every set $\mathcal{I}$ in the sequence $(\mathcal{I}_k)_k$ such that $\lambda(\mathcal{I})<2^j\e$, we got that
    \begin{align*}
    \lim_{N\to \infty}\P{\pi_t^N(\mathcal{I})>\eta} &\leq \lim_{N\to \infty}  \P{\sum_{x\in [N]}  \ind\left\{\omega_t(x)\in \mathcal{I},\xi_t(x)<j\right\}\geq N \frac{\eta}{2} }\leq  \frac{1}{\ell}.
    \end{align*} 
    Now set $\ell>1/\eta$ to derive a contradiction with \eqref{eq:Limitmeasureintervalpositive}. The proposition follows. 
\end{proof}

\subsection*{The differential equation} 
In conclusion, starting from an absolutely continuous density, by Proposition~\ref{Prop:Abscontinuous} and the uniqueness of the trajectory of Proposition \ref{Prop:Uniqueness}, the measure equation \eqref{eq:measureequation} for the absolutely continuous limit trajectory $\{\pi_t=\rho(t,x)dx : 0\leq t\leq T\}$ satisfies for every $t\in[0,T]$ the equation  
\begin{align}\label{eq:Integral-conv-equation}
    \langle \rho_t, G_t \rangle = \langle \rho_0, G_0 \rangle + \int_0^t 2\left(\Hprod{\rho_s \ast  \rho_s(\cdot),G_s(\cdot/2)}-\Hprod{\rho_s,G_s}\right)+\Hprod{\rho_s,\partial_s G_s}  \, ds,
\end{align} where $G_t\in C^{1,0}([0,T],\R)$. 
Using the substitution $z=\frac{x}{2}$ obtains
\begin{align*}
     2\Hprod{\rho_s \ast  \rho_s(\cdot),G_s\left(\frac{\cdot}{2}\right)} &=  2
    \int_{\R} G_s\left(\frac{x}{2}\right)\rho_s \ast  \rho_s(x) dx  \\
    &= 4  \int_{\R} G(z)\, \rho_s \ast \rho_s(2z) dz = 4\Hprod{\rho_s\ast \rho_s( 2 \cdot ), G_s(\cdot)} . 
\end{align*}As a consequence of Proposition \ref{Prop:Abscontinuous}, the measure $Q^*$ is concentrated in paths $\{\pi_t=\rho(t,x)dx:0\leq t\leq T\}$ that satisfy
\begin{align*}
    \langle \rho_t, G_t \rangle &= \langle \rho_0, G_0 \rangle + \int_0^t 4\Hprod{\rho_s \ast  \rho_s(2\cdot),G_s}-2\Hprod{\rho_s,G_s}+\Hprod{\rho_s,\partial_s G_s}  \, ds,\\
    \int_0^t \partial_s\langle \rho_s, G_s \rangle ds &= \int_0^t 4\Hprod{\rho_s \ast  \rho_s(2\cdot),G_s}-2\Hprod{\rho_s,G_s}+\Hprod{\rho_s,\partial_s G_s}  \, ds.
\end{align*} 
Consequently, $\rho(s,x)$ has a weak time derivative, thus it belongs to $W^{1,1}([0,T])\cap L^1(\R)$ and satisfies the weak equation
\begin{align}\label{eq:weak-convoluted-equation}
    \int_0^t \Hprod{\partial_s \rho_s- 4\rho_s \ast  \rho_s(2\cdot)+2\rho_s,G_s}  \, ds=0, \quad\forall t\in[0,T],
\end{align} where $G_s$ can be any function in $C^{1,0}([0,T],\mathbb{R})$. Moreover, since $C^{1,0}([0,T],\mathbb{R})$ is dense in $L^1([0,T],\mathbb{R})$, one can choose a sequence of continuous functions converging to the function $\partial_s \rho_s - 4\rho_s \ast \rho_s(2\cdot) + 2\rho_s$, making the integral equal to the norm. As a consequence, the weak solution is a strong solution, completing the proof of Theorem \ref{thm:HydrodinamiclimitClick}.

\subsection*{Acknowledgments.} We are grateful to Dirk Bl\"omker, Milton Jara and Stefan Großkinsky for enlighting  discussions. A.M.C. is supported by  PROBRAL-CAPES through Project 88887.979525/2024-00. T.F. acknowledges: the present work has been  supported by Coordenação de Aperfeiçoamento de Pessoal de Nível Superior- Brasil (CAPES) under financial code 8888.700851/2022-01;  has been supported by the  National Council for Scientific and Technological Development (CNPq) via a Universal Grant Number 406001/2021-9 and a Bolsa de Produtividade number 311894/2021-6; and has been supported by FAPESB (Edital FAPESB Nº 012/2022 - Universal - NºAPP0044/2023).  M.H. and M.S. are supported by Deutscher Akademischer Austauschdienst (DAAD) and Bundesministerium für Bildung und Forschung (BMBF). 

\bibliographystyle{plain}
\bibliography{ref}

\end{document}